\newcommand{\bcen}{\begin{center}}     \newcommand{\ecen}{\end{center}}
\newcommand{\bay}{\begin{array}}      \newcommand{\eay}{\end{array}}
\newcommand{\beq}{\begin{eqnarray*}}      \newcommand{\eeq}{\end{eqnarray*}}
\def\gl{\mathrm{gl.dim}}
\def\Hom{\mathrm{Hom}}
\def\op{\mathrm{op}}
\def\Ext{\mathrm{Ext}}
\def\End{\mathrm{End}}
\def\dim{\mathrm{dim}}
\def\Im{\mathrm{Im}}
\def\proj{\mathrm{proj}}
\def\Proj{\mathrm{Proj}}
\def\RHom{\mathrm{RHom}}
\def\rep{\mathrm{rep}}
\def\Tria{\mathrm{Tria}}
\def\Tor{\mathrm{Tor}}
\begin{document}

\newtheorem{theorem}{Theorem}
\newtheorem{proposition}{Proposition}
\newtheorem{lemma}{Lemma}
\newtheorem{corollary}{Corollary}
\newtheorem{remark}{Remark}
\newtheorem{example}{Example}
\newtheorem{definition}{Definition}
\newtheorem*{conjecture}{Conjecture}
\newtheorem{question}{Question}

\title{\large\bf Recollements and Hochschild theory}

\author{\large Yang Han}

\date{\footnotesize KLMM, ISS, AMSS,
Chinese Academy of Sciences, Beijing 100190, P.R. China.\\ E-mail:
hany@iss.ac.cn}

\maketitle

\bcen{\it Dedicated to the memory of Dieter Happel}\ecen

\begin{abstract} It is shown that a recollement of derived categories of algebras
induces those of tensor product algebras and opposite algebras
respectively, which is applied to clarify the relations between
recollements of derived categories of algebras and smoothness and
Hochschild cohomology of algebras.
\end{abstract}

\medskip

{\footnotesize {\bf Mathematics Subject Classification (2010)} :
16E40, 16E35, 18E30}

\medskip

{\footnotesize {\bf Keywords} : recollement, smoothness, Hochschild
cohomology.}

\bigskip

\section{\large Introduction}

\indent\indent Let $\mathcal{T}_1$, $\mathcal{T}$ and
$\mathcal{T}_2$ be triangulated categories. A {\it recollement} of
$\mathcal{T}$ relative to $\mathcal{T}_1$ and $\mathcal{T}_2$ is
given by
$$\unitlength=1mm \begin{picture}(55,14) \put(0,5){$\mathcal{T}_1$}
\put(26,5){$\mathcal{T}$} \put(50,5){$\mathcal{T}_2$}
\put(24,10){\vector(-1,0){18}} \put(15,11){$i^*$}
\put(49,10){\vector(-1,0){18}} \put(40,11){$j_!$}
\put(6,5){\vector(1,0){18}} \put(10,6){$i_*=i_!$}
\put(31,5){\vector(1,0){18}} \put(35,6){$j^!=j^*$}
\put(24,0){\vector(-1,0){18}} \put(15,1){$i^!$}
\put(49,0){\vector(-1,0){18}} \put(40,1){$j_*$}
\end{picture}$$
and denoted by 9-tuple $(\mathcal{T}_1, \mathcal{T}, \mathcal{T}_2,
i^*, i_*=i_!, i^!, j_!, j^!=j^*, j_*)$ such that

(R1) $(i^*,i_*), (i_!,i^!), (j_!,j^!)$ and $(j^*,j_*)$ are adjoint
pairs of triangle functors;

(R2) $i_*$, $j_!$ and $j_*$ are full embeddings;

(R3) $j^!i_*=0$ (and thus also $i^!j_*=0$ and $i^*j_!=0$);

(R4) for each $X \in \mathcal {T}$, there are triangles

$$\begin{array}{l} j_!j^!X \rightarrow X  \rightarrow i_*i^*X  \rightarrow
\\ i_!i^!X \rightarrow X  \rightarrow j_*j^*X  \rightarrow
\end{array}$$ where the arrows to and from $X$ are the counit and the
unit, respectively.

Recollements of triangulated categories are ``short exact
sequences'' of triangulated categories. They were introduced by
Beilinson-Bernstein-Deligne \cite{BBD} and play an important role in
algebraic geometry \cite{BBD}, representation theory \cite{CPS,PS},
etc. Let $k$ be a field and $\otimes := \otimes_k$. Throughout the
paper, all algebras are assumed to be associative $k$-algebras with
identity, and all modules are right unitary modules unless stated
otherwise. Here, we focus on recollements of derived categories of
algebras, i.e., all triangulated categories in the recollements are
derived categories of algebras, which are closely related to tilting
theory \cite{AKL,K,M}, (co)localization theory \cite{M1,Kr2}, some
important homological invariants of algebras such as global
dimension \cite{W,K,AKLY}, finitistic dimension \cite{H}, Hochschild
homology and cyclic homology \cite{Ke2}, and so on.

In this paper, we shall show that a recollement of derived
categories of algebras induces those of tensor product algebras (see
section 3) and opposite algebras (see section 4) respectively. As
applications, we shall clarify the relations between recollements of
derived categories of algebras and smoothness, i.e., finiteness of
Hochschild dimension, and Hochschild cohomology of algebras. Note
that the relations between recollements of derived categories of
algebras and Hochschild homology and cyclic homology have been
clarified already in \cite{Ke2}. More precisely, we shall show in
section 5 that, in a recollement of derived categories of algebras,
the middle algebra is smooth if and only if so are the algebras on
both sides. As a corollary, a triangular matrix algebra is smooth if
and only if so are the algebras on diagonal. In section 6, we shall
obtain three triangles on Hochschild cocomplexes which can induce
three long exact sequences on Hochschild cohomologies of algebras.
Note that these long exact sequences on Hochschild cohomologies have
been widely studied for one-point extensions \cite{H1,GMS},
triangular matrix algebras \cite{C,MP,GS,CMRS,BG}, stratifying
ideals \cite{KN}, homological epimorphisms \cite{PX,S}, etc.

\section{Standard recollements}

\indent\indent In this section, we show that every recollement of
derived categories of algebras is equivalent to a standard one in
which all triangle functors are naturally isomorphic to derived
functors. This result is already known for some experts.

\subsection{Recollements of derived categories of algebras}

\indent\indent Let $A$ be an algebra. Denote by $\Proj A$ (resp.
$\proj A$) the category of projective (resp. finitely generated
projective) $A$-modules. Denote by $D(A)$ the unbounded derived
category of complexes of $A$-modules. Let $X$ be an object in
$D(A)$. Denote by $X^{\perp}$ the full subcategory of $D(A)$
consisting of all objects $Y \in D(A)$ such that
$\Hom_{D(A)}(X,Y[n]) =0$ for all $n \in \mathbb{Z}$. Denote by
$\mbox{Tria} X$ the smallest full triangulated subcategory of $D(A)$
which contains $X$ and is closed under small coproducts. We say $X$
is {\it exceptional} if $\Hom_{D(A)}(X , X[n]) = 0$ for all $n \in
\mathbb{Z}\backslash\{0\}$. We say $X$ is {\it compact} if the
functor $\Hom_{D(A)}(X , -)$ preserves small coproduct, or
equivalently, $X$ is {\it perfect}, i.e., isomorphic in $D(A)$ to an
object in $K^b(\proj A)$, the homotopy category of bounded complexes
of finitely generated projective $A$-modules. We say $X$ is {\it
self-compact} if $\Hom_{D(A)}(X , -)$ preserves small coproducts in
$\mbox{Tria} X$ (ref. \cite{J}).

A very important criterion for the right bounded derived category of
an algebra to admit a recollement is provided in \cite{K} (cf.
\cite[Theorem 3]{NS2}). It was extended and modified to suit for the
unbounded derived categories of algebras (ref. \cite[Corollary
3.4]{NS1}), differential graded algebras (ref. \cite[Theorem
3.3]{J}) and differential graded categories (ref. \cite[Corollary
3.4]{NS1}).

\begin{proposition} \label{criterion} {\rm (K\"{o}nig \cite{K};
J{\o}rgensen \cite{J}; Nicol\'{a}s-Saorin \cite{NS1})} Let $A_1, A$
and $A_2$ be algebras. Then $D(A)$ admits a recollement relative to
$D(A_1)$ and $D(A_2)$ if and only if there are objects $X_1$ and
$X_2$ in $D(A)$ such that

{\rm (1)} $\End_{D(A)}(X_i) \cong A_i$ as algebras for $i=1,2$;

{\rm (2)} $X_2$ (resp. $X_1$) is exceptional and compact (resp.
self-compact);

{\rm (3)} $X_1 \in X_2^{\perp}$;

{\rm (4)} $X_1^{\perp} \cap X_2^{\perp} = \{0\}$.

\end{proposition}

An important example of recollements of derived categories of
algebras is given by stratifying ideals:

\begin{example} \label{stratifying ideal} {\rm  (Cline-Parshall-Scott \cite{CPS2})
Let $A$ be an algebra, $e$ an idempotent of $A$, and $AeA$ a
stratifying ideal of $A$, i.e., the multiplication in $A$ induces an
isomorphism $Ae \otimes_{eAe} eA \cong AeA$ and $\Tor
^{eAe}_n(Ae,eA)=0$ for all $n \geq 1$. Then there is a recollement
$(D(A/AeA), D(A), D(eAe), i^*, i_* \linebreak =i_!, i^!, j_!,
j^!=j^*, j_*)$ where
$$\begin{array}{lll} i^* = - \otimes^L_A A/AeA, && j_! = - \otimes^L_{eAe}eA,\\
i_*=i_!= - \otimes^L_{A/AeA}A/AeA,
&& j^!=j^*= - \otimes^L_A Ae,\\
i^!=\RHom_A(A/AeA,-), && j_*=\RHom_{eAe}(Ae,-).
\end{array}$$
}\end{example}

\subsection{Standard recollements}

\begin{definition}{\rm Let $A_1,A$ and $A_2$ be algebras.
A recollement $(D(A_1), D(A), \linebreak D(A_2), i^*,i_*=i_!, i^!,
j_!, j^!=j^*, j_*)$ is said to be {\it standard} and {\it defined
by} $Y \in D(A^{\op} \otimes A_1)$ and $Y_2 \in D(A_2^{\op} \otimes
A)$ if $i^* \cong - \otimes^L_A Y$ and $j_! \cong -
\otimes^L_{A_2}Y_2$. }\end{definition}

\begin{proposition} \label{standard}
Let $A_1,A$ and $A_2$ be algebras, and $(D(A_1), D(A), D(A_2),
i^*,i_* \linebreak =i_!, i^!, j_!, j^!=j^*, j_*)$ a standard
recollement defined by $Y \in D(A^{\op} \otimes A_1)$ and $Y_2 \in
D(A_2^{\op} \otimes A)$. Then
$$\begin{array}{lll} i^*  \cong - \otimes^L_A Y , && j_! \cong - \otimes^L_{A_2}Y_2,\\
i_*=i_! \cong \RHom_{A_1}(Y,-), && j^!=j^* \cong \RHom_A(Y_2,-),\\
i^! \cong \RHom_A(\RHom_{A_1}(Y,A_1),-), && j_* \cong
\RHom_{A_2}(\RHom_A(Y_2,A),-).
\end{array}$$
\end{proposition}

\begin{proof} Since $i_*$ is a left adjoint of $i^!$,
it commutes with small coproduct. The functor $- \otimes^L_A Y \cong
i^*$ has a right adjoint $i_*$ which commutes with small coproduct,
thus $Y$ is compact in $D(A_1)$. Therefore, $\RHom_{A_1}(Y,-) \cong
- \otimes^L_{A_1} \RHom_{A_1}(Y,A_1)$ (ref. \cite[Lemma 2.6]{M}).
Since the right adjoint is unique up to natural isomorphism, we have
$i_*=i_! \cong \RHom_{A_1}(Y,-)$ and $i^! \cong
\RHom_A(\RHom_{A_1}(Y,A_1),-)$. Similar for $j^!=j^*$ and $j_*$.
\end{proof}

\begin{proposition} \label{standard} Let $A$, $A_1$ and $A_2$ be algebras. If $D(A)$ admits a
recollement relative to $D(A_1)$ and $D(A_2)$ then $D(A)$ admits a
standard recollement relative to $D(A_1)$ and $D(A_2)$.
\end{proposition}

\begin{proof} Let $(D(A_1), D(A), D(A_2), i^*,i_*=i_!, i^!, j_!, j^!=j^*, j_*)$ be
a recollement. It follows from Proposition~\ref{criterion} that
there are objects $X_i, i=1,2,$ in $D(A)$ such that they satisfy all
conditions in Proposition~\ref{criterion}. Clearly, we may assume
that $X_2$ is homotopically projective. Since $X_2$ is exceptional,
it follows from \cite[8.3.1]{KZ} that there exists $Y_2 \in
D(A_2^{\op} \otimes A)$ such that the derived tensor functor $-
\otimes^L_{A_2} Y_2 : D(A_2) \rightarrow D(A)$ sends $A_2$ to $X_2$.
By \cite[Theorem 2.8]{M}, we have a recollement $(X_2^{\perp}, D(A),
D(A_2), i'^*,i'_* =i'_!, i'^!, j'_!, j'^!=j'^*, j'_*)$ where $j'_! =
- \otimes^L_{A_2}Y_2, j'^!=j'^*=\RHom_A(Y_2,-),
j'_*=\RHom_{A_2}(\RHom_A(Y_2,A),-),$ and $i'_*=i'_!$ is the natural
embedding.

By \cite[Theorem 3]{BH} or \cite[\S 4 Theorem]{NS1}, there is a
homological epimorphism of differential graded algebras $f : A
\rightarrow C$ such that the essential image $\Im f_*$ of the
induced functor $f_* : D(C) \rightarrow D(A)$ and $\Im i_* =
X_2^{\perp}$ coincide. In particular, $D(A_1)$ and $D(C)$ are
equivalent as triangulated categories. It follows from \cite{Ke}
that there is a two-sided tilting differential graded $A_1$-$C$
bimodule $Z$ such that $-\otimes^L_{A_1}Z : D(A_1) \rightarrow D(C)$
is a triangle equivalence. Thus the composition $D(A_1) \stackrel{-
\otimes^L_{A_1}Z}{\rightarrow} D(C) \stackrel{f_*}{\rightarrow}
D(A)$ is a fully faithful triangle functor which is naturally
isomorphic to $- \otimes^L_{A_1} Y_1$, where $Y_1$ is the image of
$Z$ under the functor $f_* : D(A_1^{\op} \otimes C) \rightarrow
D(A_1^{\op} \otimes A)$. Since $_{A_1}Z$ is compact in
$D(A_1^{\op})$, so is $_{A_1}Y_1$. Thus $Y:=
\RHom_{A_1^{\op}}(Y_1,A_1)$ is a differential graded
$A$-$A_1$-bimodule, i.e., a complex of $A$-$A_1$-bimodules, which is
compact in $D(A_1)$ so there is a natural isomorphism
$\RHom_{A_1}(Y,-) \cong -\otimes^L_{A_1}Y_1 : D(A_1) \rightarrow
D(A)$. Consequently, $- \otimes^L_AY$ is left adjoint to $-
\otimes^L_{A_1}Y_1$. Thus $D(A)$ admits a standard recollement
relative to $D(A_1)$ and $D(A_2)$ defined by $Y \in D(A^{\op}
\otimes A_1)$ and $Y_2 \in D(A_2^{\op} \otimes A)$. \end{proof}

\begin{remark}{\rm Two recollements $(\mathcal{T}_1, \mathcal{T}, \mathcal{T}_2, i^*,
i_*=i_!, i^!, j_!, j^!=j^*, j_*)$ and $(\mathcal{T}'_1,
\mathcal{T}', \mathcal{T}'_2, i'^*, i'_*=i'_!, i'^!, j'_!,
j'^!=j'^*, j'_*)$ are said to be {\it equivalent} if $(\Im i_*, \Im
j_! , \linebreak \Im j_*) = (\Im i'_*, \Im j'_! , \Im j'_*)$. From
the proof of Proposition~\ref{standard}, it is easy to see that the
new constructed recollement is equivalent to the original given
one.}\end{remark}

\section{Recollements on tensor product algebras}

\indent\indent In this section, we show that from a standard
recollement of derived categories of algebras we can obtain those of
tensor product algebras.

\begin{lemma} \label{compact-selfcompact}  Let $A$ and $B$ be algebras, and $X,Y \in D(A)$. Then
the canonical homomorphism $B \otimes \Hom_{D(A)}(X,Y) \rightarrow
\Hom_{D(B \otimes A)}(B \otimes X, B \otimes Y)$ is an isomorphism
when

{\rm (1)} $X$ is compact, or

{\rm (2)} $X$ is self-compact and $Y \in \Tria X$.

\end{lemma}

\begin{proof} Since the functor $B \otimes - : D(A) \rightarrow D(B \otimes
A)$ is left adjoint to the forgetful functor, we have $\Hom_{D(B
\otimes A)}(B \otimes X, B \otimes Y) \cong \Hom_{D(A)}(X, B \otimes
Y)$, which is further isomorphic to $B \otimes \Hom_{D(A)}(X,Y)$
when $X$ is compact, or $X$ is self-compact and $Y \in \Tria X$.
\end{proof}

\begin{lemma} \label{delete B} Let $A, B$ and $C$ be algebras, and $Y \in D(A^{\op} \otimes C)$.
Then there are natural isomorphisms

{\rm (1)} $- \otimes^L_{B \otimes A}(B \otimes Y) \cong -
\otimes^L_A Y : D(B \otimes A) \rightarrow D(B \otimes C)$, and

{\rm (2)} $\RHom_{B \otimes C}(B \otimes Y,-) \cong \RHom_C(Y,-) :
D(B \otimes C) \rightarrow D(B \otimes A)$.

\end{lemma}

\begin{proof} (1) holds since $- \otimes^L_{B \otimes A}(B \otimes Y)
\cong (- \otimes^L_B B) \otimes^L_A Y \cong - \otimes^L_A Y$. (2)
holds since the functor $B \otimes - : D(A) \rightarrow D(B \otimes
A)$ is left adjoint to the forgetful functor.
\end{proof}

\begin{theorem} \label{tensor} Let $A,A_1,A_2$ and $B$ be algebras, and
$Y \in D(A^{\op} \otimes A_1)$ and $Y_2 \in D(A_2^{\op} \otimes A)$
define a standard recollement of $D(A)$ relative to $D(A_1)$ and
$D(A_2)$. Then $B \otimes Y$ and $B \otimes Y_2$ define a standard
recollement of $D(B \otimes A)$ relative to $D(B \otimes A_1)$ and
$D(B \otimes A_2)$. Moreover, the six triangle functors in this
recollement, now denoted with capital letters, are
$$\begin{array}{lll} I^*  \cong - \otimes^L_A Y , && J_! \cong - \otimes^L_{A_2}Y_2,\\
I_*=I_! \cong \RHom_{A_1}(Y,-), && J^!=J^* \cong \RHom_A(Y_2,-),\\
I^! \cong \RHom_A(\RHom_{A_1}(Y,A_1),-), && J_* \cong
\RHom_{A_2}(\RHom_A(Y_2,A),-).
\end{array}$$

\end{theorem}

\begin{proof} We may assume that $Y$ and $Y_2$ are homotopically
projective, and $Y_1 := \RHom_{A_1}(Y,A_1)$. Then the objects $X_i
:= (Y_i)_A, i=1,2,$ in $D(A)$ satisfy all conditions in
Proposition~\ref{criterion} (ref. \cite{J,K}). Let $Z_i := B \otimes
X_i$ for $i=1,2$. Now we show that $Z_1$ and $Z_2$ satisfy all
conditions in Proposition~\ref{criterion} for tensor product
algebras.

{\it Step 1.} Since $X_2$ is compact in $D(A)$, $Z_2$ is compact in
$D(B \otimes A)$. Since $X_2$ is compact and exceptional and
$\End_{D(A)}(X_2) \cong A_2$ as algebras, by
Lemma~\ref{compact-selfcompact}, we have
$$\Hom_{D(B \otimes A)}(Z_2,Z_2[n]) \cong B \otimes \Hom_{D(A)}(X_2,X_2[n])
\cong \left\{\begin{array}{ll} B \otimes A_2, & \mbox{\rm if } n=0; \\
0, & \mbox{\rm otherwise.} \end{array}\right. $$ Thus $Z_2$ is
exceptional and $\End_{D(B \otimes A)} (Z_2) \cong B \otimes A_2$ as
algebras.

{\it Step 2.} The forgetful functor $D(B \otimes A) \rightarrow
D(A)$ maps the objects in $\Tria_{D(B \otimes A)}(B \otimes X_1)$ to
the objects in $\Tria_{D(A)}(X_1)$. Indeed, let $\mathcal {C}$ be
the full triangulated subcategory of $D(B \otimes A)$ consisting of
the objects which become objects of $\Tria_{D(A)}(X_1)$ when
applying the forgetful functor. Then $\mathcal {C}$ is a full
triangulated subcategory of $D(B \otimes A)$ containing $B \otimes
X_1$ and closed under small coproducts. Thus $\Tria_{D(B \otimes
A)}(B \otimes X_1) \subseteq \mathcal {C}$.

Since $X_1$ is self-compact, by Lemma~\ref{delete B}, for any index
set $\Lambda$ and $T_{\lambda} \in \Tria_{D(B \otimes A)}(B \otimes
X_1)$ with $\lambda \in \Lambda$, we have
$$\begin{array}{lll} \Hom _{D(B \otimes A)}(Z_1,\oplus_{\lambda \in \Lambda}T_{\lambda})
& \cong & \Hom_{D(A)}(X_1,\oplus_{\lambda \in \Lambda}T_{\lambda}) \\
& \cong & \oplus_{\lambda \in \Lambda}\Hom_{D(A)}(X_1,T_{\lambda}) \\
& \cong & \oplus_{\lambda \in \Lambda}\Hom _{D(B \otimes
A)}(Z_1,T_{\lambda}).
\end{array}$$ Thus $Z_1$ is self-compact.

Since $X_1$ is self-compact and exceptional and $\End_{D(A)}(X_1)
\cong A_1$ as algebras, by Lemma~\ref{compact-selfcompact}, we have
$$\Hom _{D(B \otimes A)}(Z_1,Z_1[n]) \cong B \otimes \Hom_{D(A)}(X_1,X_1[n])
\cong \left\{\begin{array}{ll} B \otimes A_1, & \mbox{\rm if } n=0; \\
0, & \mbox{\rm otherwise.} \end{array}\right.$$ Thus $Z_1$ is
exceptional and $\End_{D(B \otimes A)} (Z_1) \cong B \otimes A_1$ as
algebras.

{\it Step 3.} Since $X_2$ is compact and $X_1 \in X_2^{\perp}$, by
Lemma~\ref{compact-selfcompact}, we have
$$\Hom_{D(B \otimes A)}(Z_2,Z_1[n]) \cong B \otimes \Hom_{D(A)}(X_2,X_1[n]) = 0$$
for all $n \in \mathbb{Z}$. Thus $Z_1 \in Z_2^{\perp}$.

{\it Step 4.} For any $Z \in Z_1^{\perp} \cap Z_2^{\perp}$, by
Lemma~\ref{delete B}, we have
$$\Hom_{D(A)}(X_i, Z[n]) \cong \Hom_{D(B \otimes A)}(Z_i, Z[n])=0$$ for all $n \in
\mathbb{Z}$ and $i=1,2$. Thus $Z \in X_1^{\perp} \cap X_2^{\perp} =
\{0\}$. Hence $Z_1^{\perp} \cap Z_2^{\perp} = \{0\}$.

{\it Step 5.} By the Steps 1 to 4 above, we have shown that $Z_1$
and $Z_2$ satisfy all conditions in Proposition~\ref{criterion} for
tensor product algebras. Analogous to J{\o}rgensen's construction
(ref. \cite[Theorem 3.3 and Remark 3.4]{J}), we can obtain a
recollement $(D(B \otimes A_1), D(B \otimes A), D(B \otimes A_2),
I^*, I_*=I_!, I^!, J_!, J^!=J^*, J_*)$ such that
$$\begin{array}{ll} & J_! = - \otimes^L_{B \otimes A_2}(B \otimes Y_2),\\
I_*=I_!= - \otimes^L_{B \otimes A_1}(B \otimes Y_1),
& J^!=J^*=\mbox{\rm RHom}_{B \otimes A}(B \otimes Y_2,-),\\
I^!=\mbox{\rm RHom}_{B \otimes A}(B \otimes Y_1,-), & J_*=\mbox{\rm
RHom}_{B \otimes A_2}(\mbox{\rm RHom}_{B \otimes A}(B \otimes Y_2,B
\otimes A),-).
\end{array}$$

By Lemma~\ref{delete B}, we have $I_*=I_! \cong -
\otimes^L_{A_1}Y_1$, $J_! \cong - \otimes^L_{A_2}Y_2$, $I^! \cong
\RHom_A(Y_1 , -)$ and $J^!=J^* \cong \RHom_A(Y_2 , -)$. Since $Y_2$
is compact in $D(A)$, by Lemma~\ref{delete B} and
Lemma~\ref{compact-selfcompact}, we have
$$\begin{array}{ll} J_* & = \RHom_{B \otimes A_2}(\RHom_{B \otimes
A}(B \otimes Y_2, B \otimes A), -) \\ & \cong \RHom_{B \otimes
A_2}(B \otimes \RHom_A(Y_2,A),-) \\ & \cong
\RHom_{A_2}(\RHom_A(Y_2,A),-).
\end{array}$$

Clearly, $I_* \cong - \otimes^L_{A_1}Y_1 \cong \RHom_{A_1}(Y, -)$
has a left adjoint $- \otimes^L_AY \cong - \otimes^L_{B \otimes A}(B
\otimes Y)$. Thus we have $I^* \cong - \otimes^L_{B \otimes A}(B
\otimes Y) \cong - \otimes^L_AY$.
\end{proof}

\section{Recollements on opposite algebras}

\indent\indent In this section, we show that from a standard
recollement of derived categories of algebras we can obtain that of
opposite algebras.

Let $A$ and $B$ be algebras. Denote by $\rep(B,A)$ the full
subcategory of $D(B^{\op} \otimes A)$ consisting of all complexes of
$B$-$A$-bimodules which are perfect when restricted to complexes of
$A$-modules. The following result is folklore:

\begin{lemma} \label{rep} Let $A$ and $B$ be algebras. Then the derived
Hom functor \linebreak $\RHom_A(-,A) : D(B^{\op} \otimes A)
\rightarrow D(A^{\op} \otimes B)$ induces a duality from $\rep(B,A)$
to $\rep(B^{\op},A^{\op})$.
\end{lemma}

\begin{theorem} \label{op} Let $A,A_1$ and $A_2$ be algebras, and
$Y \in D(A^{\op} \otimes A_1)$ and $Y_2 \in D(A_2^{\op} \otimes A)$
define a standard recollement of $D(A)$ relative to $D(A_1)$ and
$D(A_2)$. Then $Y^\star := \RHom_{A_1}(Y,A_1)$ and $Y_2^* :=
\RHom_A(Y_2,A)$ define a standard recollement of $D(A^{\op})$
relative to $D(A_1^{\op})$ and $D(A_2^{\op})$.
\end{theorem}

\begin{proof} {\it Step 1.} Since $(Y_2)_A$ is compact and exceptional and
$\End_{D(A)}(Y_2) \cong A_2$, we have
$$\Hom_{D(A^{\op})}(Y_2^*, Y_2^*[n]) \cong
\Hom_{D(A)}(Y_2, Y_2[n]) = \left\{\begin{array}{ll} A_2, & \mbox{ if
$n=0$;} \\ 0, & \mbox{ otherwise.}
\end{array}\right.$$ Therefore, $_A(Y_2^*)$ is compact and
exceptional and $\End_{D(A^{\op})}(Y_2^*) \cong A_2^{\op}.$

{\it Step 2.} It follows from Theorem~\ref{tensor} by taking
$B=A_1^\op$ that $Y^\star \otimes^L_A Y \cong A_1$ in $D(A_1^\op
\otimes A_1)$. Thus $Y \otimes^L_{A_1} -$ is a full embedding. Hence
$_AY$ is self-compact in $D(A)$ (ref. \cite[Lemma 1.7]{J}).

By Lemma~\ref{rep}, we have $Y \cong Y^{\star\star}$ in $D(A^\op
\otimes A_1)$. Since $_{A_1}Y^\star$ is compact, we have
$$\begin{array}{rcl} \Hom_{D(A^{\op})}(Y, Y[n]) &
\cong & \Hom_{D(A^{\op})}(Y, Y^{\star\star}[n])
\\ & \cong & \Hom_{D(A^{\op})}(Y, \RHom_{A_1^{\op}}(Y^\star,
A_1[n])) \\ & \cong & \Hom_{D(A_1^{\op})}(Y^\star \otimes^L_A Y,
A_1[n]) \\ & \cong & \Hom_{D(A_1^{\op})}(A_1, A_1[n]) \\
& \cong & H^0(A_1[n]) \\ & \cong & \left\{\begin{array}{ll} A_1, &
\mbox{ if  $n=0$;} \\ 0, & \mbox{ otherwise.}
\end{array}\right. \end{array}$$

Thus $_AY$ is exceptional and $\End_{D(A^{\op})}(Y) \cong A_1^{\op}$
as algebras where the isomorphism is given by sending an element
$a_1 \in A_1$ to the right multiplication of $Y$ by $a_1$.

{\it Step 3.} Since $(Y_2)_A$ is compact and $Y^{\star} \in
Y_2^{\perp}$ in $D(A)$, we have
$$\begin{array}{rcl} \Hom_{D(A^{\op})}(Y_2^*,Y) & \cong &
\Hom_{D(A^{\op})}(Y_2^*, \RHom_{A_1^{\op}}(Y^\star,A_1)) \\ & \cong
& \Hom_{D(A_1^{\op})}(Y^{\star} \otimes^L_AY_2^*,A_1) \\ & \cong &
\Hom_{D(A_1^{\op})}(\RHom_A(Y_2, Y^{\star}),A_1) =0.
\end{array}$$ Thus $Y \in Y_2^{* \perp}$ in $D(A^{\op})$.

{\it Step 4.} For any $X \in Y^\perp \cap (Y_2^*)^\perp \subseteq
D(A^{\op})$, since $_A(Y_2^*)$ is compact and $X \in (Y_2^*)^\perp$,
we have $Y_2 \otimes^L_AX \cong Y_2^{**} \otimes^L_AX \cong
\RHom_{A^\op}(Y_2^*,X)=0$. It follows from Theorem~\ref{tensor} by
taking $B=A^\op$ that there exists a triangle $Y_2^*
\otimes^L_{A_2}Y_2 \rightarrow A \rightarrow Y
\otimes^L_{A_1}Y^{\star} \rightarrow$ \; in $D(A^\op \otimes A)$,
further a triangle $Y_2^* \otimes^L_{A_2}Y_2 \otimes^L_A X
\rightarrow X \rightarrow Y \otimes^L_{A_1}Y^{\star} \otimes^L_A X
\rightarrow$ \; in $D(A^{\op})$. Therefore, $X \cong Y
\otimes^L_{A_1}Y^{\star} \otimes^L_A X$ in $D(A^{\op})$.
Furthermore, $\Hom_{D(A^{\op})}(X,X) \cong \Hom_{D(A^{\op})}(Y
\otimes^L_{A_1}Y^{\star} \otimes^L_A X,X) \cong
\Hom_{D(A_1^{\op})}(Y^{\star} \otimes^L_A X,\RHom_{A^\op}(Y,X))=0$,
since $X \in Y^\perp$. Hence, $Y^\perp \cap (Y_2^*)^\perp=\{0\}$.

{\it Step 5.} By the Steps 1 to 4 above, we have shown that $_AY$
and $_AY_2^*$ satisfy all conditions in Proposition~\ref{criterion}
for opposite algebras. Analogous to J{\o}rgensen's construction
(ref. \cite[Theorem 3.3 and Remark 3.4]{J}), we can obtain a
recollement $(D(A_1^{\op}), D(A^{\op}), D(A_2^{\op}), i^*, i_*=i_!,
i^!, j_!, j^!=j^*, j_*)$ such that
$$\begin{array}{ll} & j_!=Y_2^* \otimes^L_{A_2} -,\\
i_*=i_! = Y \otimes^L_{A_1} -,
& j^!=j^* = \RHom_{A^{\op}}(Y_2^*,-),\\
i^!=\RHom_{A^{\op}}(Y,-), & j_* = \RHom_{A_2^{\op}}(Y_2,-).
\end{array}$$
Clearly, $Y^{\star} \otimes^L_A -$ is a left adjoint of $Y
\otimes^L_{A_1} - \cong \RHom_{A_1^\op}(Y^{\star}, -)$. Thus
$Y^{\star}$ and $Y_2^*$ define a standard recollement of
$D(A^{\op})$ relative to $D(A_1^{\op})$ and $D(A_2^{\op})$.
\end{proof}

\begin{remark} {\rm One referee showed me an elegant proof of Theorem~\ref{op}
by using the correspondence between the smashing subcategories of
$D(A)$ and the idempotent ideals of the category $D^c(A)$ of compact
objects of $D(A)$ (ref. \cite{Kr1,NS1}). Here, we just provide a
relatively elementary proof.}
\end{remark}

\section{\large Recollements and smoothness}

\indent\indent In this section, we shall apply the results obtained
in sections 3 and 4 to study the relation between recollements of
derived categories of algebras and smoothness of algebras. For this,
we need to know the relation between recollements of derived
categories of algebras and global dimensions of algebras. Recently,
the following result is proved:

\begin{proposition} \label{gldim} {\rm (Angeleri
H\"{u}gel-K\"{o}nig-Liu-Yang \cite{AKLY})} Let $A_1,A$ and $A_2$ be
algebras, and $D(A)$ admit a recollement relative to $D(A_1)$ and
$D(A_2)$. Then $A$ is of finite global dimension if and only if so
are $A_1$ and $A_2$.
\end{proposition}

Let $A$ be an algebra and $A^e := A^\op \otimes A$ its enveloping
algebra. The {\it Hochschild dimension} $\dim A$ of $A$ is the
projective dimension of $A$ as a left or right $A^e$-module. The
Hochschild dimensions of algebras were studied very early \cite{CE}.
An algebra $A$ is of Hochschild dimension 0 if and only if $A^e$ is
semisimple \cite[Theorem 7.9]{CE}. In case $A$ is finitely
generated, $A$ is of Hochschild dimension 0 if and only if $A$ is
separable \cite[Theorem 7.10]{CE}. The algebras of Hochschild
dimension $\leq 1$ are called {\it quasi-free} or {\it formally
smooth} \cite{CQ,KR}. An algebra $A$ is said to be {\it smooth} if
it has finite Hochschild dimension, i.e., the projective dimension
of $A$ as $A^e$-module is finite (ref. \cite{VDB}), or equivalently,
$A$ is isomorphic to an object in $K^b(\Proj A^e)$, the homotopy
category of bounded complexes of projective $A^e$-modules. It is
well-known that $A$ is smooth if and only if $\gl A^e < \infty$
where $\gl A^e$ denotes the global dimension of the algebra $A^e$.
Indeed, this follows from the lemma below (ref. \cite[Chap. IX,
Proposition 7.5, 7.6]{CE} and \cite[Proposition 2]{ERZ}).

\begin{lemma} \label{global-dim} Let $A$ and $B$ be algebras. Then the following assertions hold true:

{\rm (1)} $\gl A \leq \dim A \leq \gl A^e$,

{\rm (2)} $\gl A \otimes B \leq \gl A + \dim B$.

\end{lemma}

\begin{remark}{\rm \hspace{1cm}\\
\indent (1) {\it Sometimes $\gl A < \infty \Leftrightarrow \gl A^e <
\infty$}: Let $A$ be either a commutative Noetherian algebra over a
perfect field $k$, or a finite-dimensional $k$-algebra such that the
factor algebra $A/J$ of $A$ modulo its Jacobson radical $J$ is
separable. Then $\gl A < \infty$ if and only if $\gl A^e < \infty$
(ref. \cite[Theorem 2.1]{HKR} and \cite[Theorem 16]{A}).

(2) {\it In general $\gl A < \infty \nRightarrow \gl A^e < \infty$}:
Let $A$ be a finite inseparable field extension of an imperfect
field $k$. Then $\gl A = 0$. However, $\gl A^e = \infty$, since $A
\otimes_k A$ is not semisimple (ref. \cite[Page 65, Remark]{A2}).
}\end{remark}

Let the algebras $A$ and $B$ be derived equivalent. Then by
\cite[Proposition 9.1]{R1} and \cite[Theorem 2.1]{R} we know $A^e$
and $B^e$ are derived equivalent. Thus $\gl A^e < \infty$ if and
only if $\gl B^e < \infty$. Hence, $A$ is smooth if and only if so
is $B$, i.e., the smoothness of algebras is invariant under derived
equivalences. More general, we have the following result:

\begin{theorem} \label{smooth} Let $A_1,A$ and $A_2$ be algebras,
and $D(A)$ admit a recollement relative to $D(A_1)$ and $D(A_2)$.
Then $A$ is smooth if and only if so are $A_1$ and $A_2$.
\end{theorem}

\begin{proof} By Proposition~\ref{standard} and Theorem~\ref{tensor}, we have a recollement of
$D(A^\op \otimes A)$ relative to $D(A^\op \otimes A_1)$ and $D(A^\op
\otimes A_2)$. It follows from Proposition~\ref{gldim} that $\gl A^e
< \infty$ if and only if $\gl A^\op \otimes A_i < \infty$ for all
$i=1,2$. By Proposition~\ref{standard}, Theorem~\ref{op} and
Theorem~\ref{tensor}, we have a recollement of $D(A^\op \otimes
A_i)$ relative to $D(A^\op_1 \otimes A_i)$ and $D(A^\op_2 \otimes
A_i)$. It follows from Proposition~\ref{gldim} that for $i=1,2$,
$\gl A^\op \otimes A_i < \infty$ if and only if $\gl A^\op_j \otimes
A_i < \infty$ for all $j=1,2$. Therefore, $\gl A^e < \infty$ if and
only if $\gl A^e_i < \infty$ for all $i = 1,2$, by
Lemma~\ref{global-dim}.
\end{proof}

Theorem~\ref{smooth} can be applied to judge the smoothness of some
algebras or construct some smooth algebras. For instance, when
applied to triangular matrix algebras, we have the following result:

\begin{corollary} \label{triangular matrix} Let $A_1$ and $A_2$ be
algebras, $M$ an $A_2$-$A_1$-bimodule, and $A={\tiny
\left[\begin{array}{cc} A_1&0\\M&A_2 \end{array}\right]}$. Then $A$
is smooth if and only if so are $A_1$ and $A_2$.
\end{corollary}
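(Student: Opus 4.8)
The plan is to realize the triangular matrix algebra $A$ as the middle algebra of a \emph{perfect} recollement whose two outer algebras are exactly $A_1$ and $A_2$, and then to invoke Theorem~\ref{Van den Bergh} verbatim. The existence of such a recollement is already recorded in Example~\ref{perfect example}(2): the two objects $X_1$ and $X_2$ of $D(A)$ cut out by the diagonal idempotents of $A$ satisfy all four conditions of Theorem~\ref{perfect}, and hence $D(A)$ admits a perfect recollement relative to $D(A_1)$ and $D(A_2)$. So the corollary should fall out as a direct specialization, with essentially no deduction left to carry out beyond assembling the two ingredients.

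Concretely, I would proceed in two steps. First, I would cite Example~\ref{perfect example}(2) to produce the perfect recollement $(D(A_1), D(A), D(A_2), i^*, i_*=i_!, i^!, j_!, j^!=j^*, j_*)$ associated with $A={\tiny\left[\begin{array}{cc}A_1&0\\M&A_2\end{array}\right]}$. Second, I would apply Theorem~\ref{Van den Bergh} to this recollement, which states that in a perfect recollement of derived categories of algebras the middle algebra is smooth if and only if both outer algebras are. Combining the two yields immediately that $A$ is smooth if and only if $A_1$ and $A_2$ are smooth, which is the assertion.

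Since the argument is a one-line combination, there is no genuine obstacle in the final deduction itself; the substance lives entirely in the two upstream results. If one wanted to make the proof self-contained, the point that would deserve the most care is the \emph{perfection} of the recollement, i.e.\ checking conditions (1)--(2) of Theorem~\ref{perfect} for $X_1$ and $X_2$: one must verify not merely that these are exceptional with $\End_{D(A)}(X_i)\cong A_i$, but that they are perfect objects of $D(A)$ (so that the recollement is perfect rather than merely a recollement). This is what distinguishes the triangular matrix situation from a general stratifying ideal as in Example~\ref{stratifying ideal}, where perfection can fail (compare Example~\ref{perfect example}(4)); here it holds because $X_1$ and $X_2$ are the finitely generated projective right $A$-module summands with $A=X_1\oplus X_2$. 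As that verification is exactly the content of Example~\ref{perfect example}(2), I would simply lean on it rather than reprove it, and regard the corollary as settled.
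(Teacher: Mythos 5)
Your proposal is correct and follows exactly the paper's own argument: the paper's proof reads in full ``It follows from Example~\ref{perfect example} (2) and Theorem~\ref{Van den Bergh}'', which is precisely your two-step combination of the perfect recollement from the triangular matrix structure with the smoothness theorem for perfect recollements. Your additional remark correctly identifies perfection of $X_1$ and $X_2$ (as direct summands of $A_A$) as the point carrying the content, matching the paper's reliance on Example~\ref{perfect example}(2).
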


\begin{proof} Note that $X_1
:= {\tiny \left[\begin{array}{cc} 1_{A_1}&0\\0&0
\end{array}\right]}A$ and $X_2 := {\tiny \left[\begin{array}{cc}
0&0\\0&1_{A_2}
\end{array}\right]}A$ satisfy all conditions in Proposition~\ref{criterion}.
Thus there is a recollement of $D(A)$ relative to $D(A_1)$ and
$D(A_2)$ (ref. \cite[Corollary 15]{K}). Now the corollary follows
from Theorem~\ref{smooth}.
\end{proof}

\begin{remark}{\rm An algebra $A$ is said to be {\it homologically smooth} if $A$
is compact in $D(A^e)$, i.e., $A$ is isomorphic in $D(A^e)$ to an
object in $K^b(\proj A^e)$ (ref. \cite{KS}). Let $A$ be the infinite
Kronecker algebra {\tiny $\left[\begin{array}{cc} k&0\\V&k
\end{array}\right]$}, where $V$ is an infinite-dimensional $k$-vector
space. Choose $X_2$ to be the simple projective $A$-module and $X_1$
the other simple $A$-module. Then $X_1$ and $X_2$ satisfy all
conditions in Proposition~\ref{criterion}. Thus $D(A)$ admits a
recollement relative to $D(k)$ and $D(k)$ (ref. \cite[Example
9]{K}). By Corollary~\ref{triangular matrix}, we know the infinite
Kronecker algebra is smooth. However, it is not homologically
smooth, because the finitely generated projective $A^e$-module
resolution of $A$ would induce a finitely generated projective
$A$-module resolution of the nonprojective simple $A$-module. Hence,
Corollary~\ref{triangular matrix} and Theorem~\ref{smooth} are not
correct for homological smoothness. }\end{remark}

\section{\large Recollements and Hochschild cohomology}

\indent\indent In this section, we shall apply the results obtained
in sections 3 and 4 to observe the relations between recollements of
derived categories of algebras and Hochschild cohomology of
algebras. Note that the relation between recollements of derived
categories of algebras and Hochschild homology of algebras had been
clarified by Keller in \cite{Ke2}. Recall that the {\it $n$-th
Hochschild homology} of an algebra $A$ is $HH_n(A) :=
\Tor^{A^e}_n(A,A) \cong H^{-n}(A \otimes^L_{A^e}A)$. In $D(k)$ the
complex $A \otimes^L_{A^e}A$ is isomorphic to the Hochschild complex
of $A$. The following result is due to Keller, which is a corollary
of \cite[Theorem 3.1]{Ke2} (ref. \cite[Remarks 3.2 (a)]{Ke2}) and
can be also proved by using Theorem~\ref{tensor}.

\begin{proposition} \label{HH triangle} {\rm (Keller \cite{Ke2})}
Let $A,A_1$ and $A_2$ be algebras, and $D(A)$ admit a recollement
relative to $D(A_1)$ and $D(A_2)$. Then there is a triangle in
$D(k)$:
$$A_2 \otimes^L_{A^e_2} A_2
\rightarrow A \otimes^L_{A^e} A \rightarrow A_1 \otimes^L_{A_1^e}
A_1 \rightarrow .$$
\end{proposition}

From the triangle in Proposition~\ref{HH triangle}, by taking
cohomologies, we can obtain a long exact sequence on the Hochschild
homologies of the algebras:

\begin{corollary} \label{HH long exact sequence} {\rm (Keller \cite{Ke2})}
Let $A,A_1$ and $A_2$ be algebras,
and $D(A)$ admit a recollement relative to $D(A_1)$ and $D(A_2)$.
Then there is a long exact sequence on the Hochschild homologies of
these algebras
$$\cdots \rightarrow HH_{n+1}(A_1) \rightarrow HH_n(A_2) \rightarrow
HH_n(A) \rightarrow HH_n(A_1) \rightarrow \cdots .$$
\end{corollary}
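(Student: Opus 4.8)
The plan is to read the long exact sequence directly off the distinguished triangle supplied by Theorem~\ref{HH triangle}, invoking only the standard fact that in a triangulated category a distinguished triangle induces a long exact sequence in cohomology. Concretely, the triangle
$$A_2 \otimes^L_{A^e_2} A_2 \rightarrow A \otimes^L_{A^e} A \rightarrow A_1 \otimes^L_{A_1^e} A_1 \rightarrow$$
in $D(k)$ gives, upon applying the cohomology functors $H^n(-)$, the long exact sequence
$$\cdots \rightarrow H^n(A_2 \otimes^L_{A^e_2} A_2) \rightarrow H^n(A \otimes^L_{A^e} A) \rightarrow H^n(A_1 \otimes^L_{A_1^e} A_1) \rightarrow H^{n+1}(A_2 \otimes^L_{A^e_2} A_2) \rightarrow \cdots,$$
in which the last arrow is the connecting homomorphism of the triangle.

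Next I would translate each cohomology group into a Hochschild homology group by means of the identification recalled at the start of the section, namely $HH_m(B) \cong H^{-m}(B \otimes^L_{B^e} B)$ for any algebra $B$, equivalently $H^n(B \otimes^L_{B^e} B) \cong HH_{-n}(B)$. Substituting this for $B = A_1, A, A_2$ turns the sequence above into
$$\cdots \rightarrow HH_{-n}(A_2) \rightarrow HH_{-n}(A) \rightarrow HH_{-n}(A_1) \rightarrow HH_{-n-1}(A_2) \rightarrow \cdots.$$
Re-indexing by setting $m = -n$, so that the connecting map $H^n \to H^{n+1}$ becomes $HH_m(A_1) \to HH_{m-1}(A_2)$, and then writing $m = n+1$ in the displayed statement, recovers exactly the asserted cyclic pattern $\cdots \rightarrow HH_{n+1}(A_1) \rightarrow HH_n(A_2) \rightarrow HH_n(A) \rightarrow HH_n(A_1) \rightarrow \cdots$.

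The only point requiring any care is the bookkeeping of the homological degree. Since Hochschild homology is the $(-n)$-th cohomology of the complex $B \otimes^L_{B^e} B$, the connecting homomorphism of the triangle, which raises cohomological degree by one, \emph{lowers} the Hochschild degree by one; this is precisely what produces the shift $HH_{n+1}(A_1) \to HH_n(A_2)$ in the statement. Beyond this, there is no genuine obstacle: once the triangle of Theorem~\ref{HH triangle} is in hand, the corollary is a formal consequence of the long exact cohomology sequence attached to a distinguished triangle together with the definition of Hochschild homology. I would simply confirm that the three types of terms land in $HH_\bullet(A_2)$, $HH_\bullet(A)$ and $HH_\bullet(A_1)$ in the claimed order, which is immediate from the three-fold periodicity of the long exact sequence.
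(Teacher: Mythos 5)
Your proposal is correct and coincides with the paper's own (implicit) proof: the paper derives the corollary from the triangle of Theorem~\ref{HH triangle} exactly as you do, by taking cohomologies and using $HH_n(B) \cong H^{-n}(B \otimes^L_{B^e} B)$, and your degree bookkeeping for the connecting map is accurate.
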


Now we consider Hochschild cohomology. Recall that the {\it $n$-th
Hochschild cohomology} of an algebra $A$ is $HH^n(A) :=
\Ext_{A^e}^n(A,A) \cong H^n(\RHom_{A^e}(A,A))$. Note that in $D(k)$
the complex $\RHom_{A^e}(A,A)$ is isomorphic to the Hochschild
cochain complex or Hochschild cocomplex of $A$. From a recollement
of derived categories of algebras, we shall obtain three triangles
on Hochschild cocomplexes of these algebras, which can induce three
long exact sequences on their Hochschild cohomologies.

The following lemma is essentially due to K\"{o}nig and Nagase (cf.
\cite[Lemma 2.1]{KN}).

\begin{lemma} \label{3 triangles} Let $A$ be an algebra
and $X \stackrel{u}{\rightarrow} Y \stackrel{v}{\rightarrow} Z
\rightarrow$ a triangle in $D(A)$ such that $\RHom_A(X,Z)=0$ in
$D(k)$. Then there are three triangles in $D(k)$:
$$\begin{array}{ll}
(1) & \RHom_A(Y,X) \rightarrow \RHom_A(Y,Y) \stackrel{\phi}{\rightarrow} \RHom_A(Z,Z) \rightarrow \; ,\\
(2) & \RHom_A(Z,Y) \rightarrow \RHom_A(Y,Y) \stackrel{\psi}{\rightarrow} \RHom_A(X,X) \rightarrow \; , \\
(3) & \RHom_A(Z,X) \rightarrow \RHom_A(Y,Y)
\stackrel{\varphi}{\rightarrow} \RHom_A(X,X) \oplus \RHom_A(Z,Z)
\rightarrow .
\end{array}$$
Moreover, $\phi$ (resp. $\psi$, $\varphi$) induces a homomorphism of
graded rings $\bar{\phi}$ (resp. $\bar{\psi}$, $\bar{\varphi}$)
between the corresponding cohomology rings.
\end{lemma}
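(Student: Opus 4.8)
The plan is to exploit the triangle $X \stackrel{u}{\to} Y \stackrel{v}{\to} Z \to$ together with the orthogonality hypothesis $\RHom_A(X,Z) = 0$ by applying the (contravariant and covariant) triangulated functors $\RHom_A(-,?)$ and $\RHom_A(?,-)$ to it. The three triangles should all arise mechanically from the fact that $\RHom_A$ is triangulated in each variable, and the vanishing condition is exactly what collapses the otherwise four-term octahedral data into the clean three-term triangles stated. First I would produce triangle (1): applying the covariant functor $\RHom_A(?, -)$ is not quite what is wanted; instead, apply $\RHom_A(-, Y)$ and $\RHom_A(-, Z)$ to the triangle, and combine. More directly, I would rotate the given triangle to $Z[-1] \to X \to Y \to Z$ and apply $\RHom_A(Y, -)$, obtaining
$$\RHom_A(Y, Z[-1]) \to \RHom_A(Y,X) \to \RHom_A(Y,Y) \to \RHom_A(Y,Z).$$
Here I would use the hypothesis in the form $\RHom_A(X,Z)=0$ to identify $\RHom_A(Y,Z) \cong \RHom_A(Z,Z)$: applying $\RHom_A(-,Z)$ to $X \to Y \to Z$ gives a triangle $\RHom_A(Z,Z) \to \RHom_A(Y,Z) \to \RHom_A(X,Z)=0$, so the second map is an isomorphism and the connecting map $\RHom_A(Y,Y) \to \RHom_A(Y,Z)\cong \RHom_A(Z,Z)$ is the desired $\phi$ (realized by postcomposition with $v$, i.e. $v \circ -$ followed by the identification).

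For triangle (2), symmetrically I would apply $\RHom_A(-, X)$ to $X \to Y \to Z$, giving $\RHom_A(Z,X) \to \RHom_A(Y,X) \to \RHom_A(X,X)$, and separately use that $\RHom_A(X,Z)=0$ forces $\RHom_A(X,Y)\cong \RHom_A(X,X)$; applying $\RHom_A(X,-)$ to the triangle yields $\RHom_A(X,X) \to \RHom_A(X,Y) \to \RHom_A(X,Z)=0$. The map $\psi$ is then precomposition with $u$, namely $- \circ u : \RHom_A(Y,Y) \to \RHom_A(X,Y) \cong \RHom_A(X,X)$, and its fiber is $\RHom_A(Z,Y)$, obtained by applying $\RHom_A(-,Y)$ to the triangle and using the analogous vanishing. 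For triangle (3) I would use the octahedral axiom, or more concretely combine (1) and (2): the map $\varphi = (\psi, \phi)$ lands in $\RHom_A(X,X) \oplus \RHom_A(Z,Z)$, and I expect its fiber to be $\RHom_A(Z,X)$. The cleanest route is to apply $\RHom_A(-,X)$ and $\RHom_A(-,Z)$ to the triangle and assemble the resulting triangles via a direct-sum/octahedral argument, invoking $\RHom_A(X,Z)=0$ to kill the cross terms so that only $\RHom_A(Z,X)$ survives as the fiber.

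The main obstacle is the final clause on ring homomorphisms. The objects $\RHom_A(Y,Y)$, $\RHom_A(X,X)$, $\RHom_A(Z,Z)$ are differential graded endomorphism algebras whose cohomologies are graded rings under the Yoneda/composition product, and I must check that $\bar\phi$, $\bar\psi$, $\bar\varphi$ respect this multiplication. For $\bar\phi$, realized by $f \mapsto v f v^{-1}$ after identification (restriction of endomorphisms of $Y$ to the quotient $Z$), multiplicativity is the statement that this restriction is compatible with composition, which I would verify by tracking the identifications $\RHom_A(Y,Z)\cong\RHom_A(Z,Z)$ and $\RHom_A(Z,Y)\cong\RHom_A(Z,Z)$ coming from the vanishing hypothesis; the subtlety is that $\phi$ is a \emph{composite} of a connecting map with an inverted quasi-isomorphism, so I must confirm these identifications are ring maps, not merely additive. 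For $\bar\psi$ the argument is dual (restriction along the sub-object $X$), and for $\bar\varphi$ the target carries the product ring structure, so multiplicativity of $\bar\varphi$ reduces to that of $\bar\psi$ and $\bar\phi$ componentwise. I would handle all three uniformly by exhibiting each map at the level of homotopy-compatible dg endomorphism algebras, where composition is literally composition of maps, making multiplicativity transparent once the orthogonality identifications are in place.
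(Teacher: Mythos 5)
Your proposal is correct and follows essentially the same route as the paper: triangles (1) and (2) arise from applying $\RHom_A(-,-)$ in each variable to the given triangle and using $\RHom_A(X,Z)=0$ to identify $\RHom_A(Y,Z)\cong\RHom_A(Z,Z)$ and $\RHom_A(X,Y)\cong\RHom_A(X,X)$, while triangle (3) comes from the octahedral axiom with the connecting map (a shift of a map factoring through $\RHom_A(X,Z)$) vanishing so the cone splits --- precisely the paper's $4\times 4$-diagram-plus-octahedron argument. Your concern about multiplicativity of $\bar{\phi}$ is resolved the way the paper implicitly handles it: since $\Hom_{D(A)}(X[1],Z[n])=0$, the element $\bar{\phi}(f_n)$ satisfying $\bar{\phi}(f_n)\circ v = v[n]\circ f_n$ is \emph{unique}, and uniqueness makes multiplicativity formal, so no descent to dg endomorphism algebras is needed.
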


\begin{proof} Applying the bifunctor $\RHom_A(-,-)$ to the triangle
$X \stackrel{u}{\rightarrow} Y \stackrel{v}{\rightarrow} Z
\rightarrow$, we have the following commutative diagram:

{\scriptsize $$\begin{array}{ccccccc} \RHom_A(X[1],Z[-1]) &
\rightarrow & \RHom_A(X[1],X) & \rightarrow &
\RHom_A(X[1],Y) & \rightarrow & \RHom_A(X[1],Z) \\
\downarrow && \downarrow && \downarrow && \downarrow \\
\RHom_A(Z,Z[-1]) & \rightarrow & \RHom_A(Z,X) & \rightarrow &
\RHom_A(Z,Y) & \rightarrow
& \RHom_A(Z,Z) \\ \downarrow && \downarrow && \downarrow && \downarrow \\
\RHom_A(Y,Z[-1]) & \rightarrow & \RHom_A(Y,X) &
\rightarrow & \RHom_A(Y,Y) & \rightarrow & \RHom_A(Y,Z) \\
\downarrow && \downarrow && \downarrow && \downarrow
\\ \RHom_A(X,Z[-1]) & \rightarrow & \RHom_A(X,X) & \rightarrow &
\RHom_A(X,Y) & \rightarrow & \RHom_A(X,Z)
\end{array}$$}

\noindent in which the four corners are zero by the assumption
$\RHom_A(X,Z)=0$ in $D(k)$. It follows two triangles (1) and (2).

By Octahedral axiom, we have the following commutative diagram:

{\scriptsize $$\begin{array}{ccccccc} & & \RHom_A(Z,Z[-1]) & = & \RHom_A(Z,Z[-1]) & & \\
&& \downarrow && \downarrow && \\
\RHom_A(Z,X) & \rightarrow & \RHom_A(Y,X) & \rightarrow &
\RHom_A(X,X) & \rightarrow &
\RHom_A(Z[-1],X) \\ \| && \downarrow && \downarrow && \| \\
\RHom_A(Z,X) & \rightarrow & \RHom_A(Y,Y) &
\rightarrow & \RHom_A(X,X) \oplus \RHom_A(Z,Z) & \rightarrow & \RHom_A(Z[-1],X) \\
&& \downarrow && \downarrow && \\ & & \RHom_A(Z,Z) & = &
\RHom_A(Z,Z) & &
\end{array}$$}

\noindent where the morphism $\RHom_A(Z,Z[-1]) \rightarrow
\RHom_A(X,X)$ is zero. It follows the triangle (3).

For the last statement, it is enough to note that $\phi$ induces a
map $$\bar{\phi} : \oplus_{n \in \mathbb{Z}} \Hom_{D(A)}(Y,Y[n])
\rightarrow \oplus_{n \in \mathbb{Z}} \Hom_{D(A)}(Z,Z[n])$$ sending
$f_n \in \Hom_{D(A)}(Y, Y[n])$ to the unique morphism
$\bar{\phi}(f_n) \in \Hom_{D(A)}(Z, \linebreak Z[n])$ satisfying
$\bar{\phi}(f_n) \circ v = v[n] \circ f_n$, i.e., the following
diagram in $D(A)$ is commutative:
$$\begin{array}{rcl} Y & \stackrel{v}{\rightarrow} & Z \\
f_n \downarrow && \downarrow \bar{\phi}(f_n) \\
Y[n] & \stackrel{v[n]}{\rightarrow} & Z[n] , \end{array}$$ which is
clearly a homomorphism of graded rings. Similar for $\bar{\psi}$ and
$\bar{\varphi}$.
\end{proof}

The main result in this section is the following:

\begin{theorem} \label{HH^ trianle} Let $A_1,A$ and $A_2$ be algebras,
and $(D(A_1), D(A), D(A_2), i^*,i_* \linebreak =i_!, i^!, j_!,
j^!=j^*, j_*)$ a standard recollement given by $Y \in D(A^{\op}
\otimes A_1)$ and $Y_2 \in D(A_2^{\op} \otimes A)$. Then there are
three triangles in $D(k)$:
$$\begin{array}{ll} (1) &
\RHom_{A^e}(A,\RHom_A(Y_2,A) \otimes^L_{A_2}Y_2) \\ & \rightarrow
\RHom_{A^e}(A,A)
\stackrel{\phi}{\rightarrow} \RHom_{A_1^e}(A_1,A_1) \rightarrow \; , \\
(2) & \RHom_{A^e}(\RHom_{A_1}(Y,Y),A) \\ & \rightarrow
\RHom_{A^e}(A,A) \stackrel{\psi}{\rightarrow} \RHom_{A_2^e}(A_2,A_2) \rightarrow \; , \\
(3) & \RHom_{A^e}(\RHom_{A_1}(Y,Y),\RHom_A(Y_2,A)
\otimes^L_{A_2}Y_2) \\ & \rightarrow \RHom_{A^e}(A,A)
\stackrel{\varphi}{\rightarrow} \RHom_{A_1^e}(A_1,A_1) \oplus
\RHom_{A_2^e}(A_2,A_2) \rightarrow .
\end{array}$$
Moreover, $\phi$ (resp. $\psi$, $\varphi$) induces a homomorphism of
graded rings $\bar{\phi}$ (resp. $\bar{\psi}$, $\bar{\varphi}$)
between the corresponding Hochschild cohomology rings.
\end{theorem}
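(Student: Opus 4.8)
The plan is to collapse everything into a single triangle in $D(A^e)$ and feed it to Lemma~\ref{3 triangles}. First I would apply Theorem~\ref{realize as derived functors standard}(2) with $B=A^{\op}$ to the given standard recollement, obtaining a recollement $(D(A^{\op}\otimes A_1), D(A^e), D(A^{\op}\otimes A_2), I^*, I_*=I_!, I^!, J_!, J^!=J^*, J_*)$ of $D(A^e)=D(A^{\op}\otimes A)$ in which $I^*\cong -\otimes^L_A Y$, $I_*\cong \RHom_{A_1}(Y,-)$, $J_!\cong -\otimes^L_{A_2}Y_2$ and $J^!\cong\RHom_A(Y_2,-)$. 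Its defining triangle (axiom (R4)) reads $J_!J^!A\to A\to I_*I^*A\to$, and since $I^*A\cong Y$ and $J^!A\cong\RHom_A(Y_2,A)$ its outer terms are exactly $J_!J^!A\cong \RHom_A(Y_2,A)\otimes^L_{A_2}Y_2$ and $I_*I^*A\cong\RHom_{A_1}(Y,Y)$. Before invoking the lemma I would verify its hypothesis $\RHom_{A^e}(J_!J^!A, I_*I^*A)=0$: by the $(J_!,J^!)$-adjunction this is $\RHom_{A^{\op}\otimes A_2}(J^!A, J^!I_*I^*A)$, which vanishes because $J^!I_*=0$ (axiom (R3)).

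Applying Lemma~\ref{3 triangles} with its ``$A$'' taken to be $A^e$ and its triangle $X\to Y\to Z\to$ taken to be $J_!J^!A\to A\to I_*I^*A\to$ then produces three triangles in $D(k)$ whose middle term is $\RHom_{A^e}(A,A)$ and whose ``kernel'' terms are $\RHom_{A^e}(A,J_!J^!A)$, $\RHom_{A^e}(I_*I^*A,A)$ and $\RHom_{A^e}(I_*I^*A,J_!J^!A)$; after substituting the explicit forms above these are precisely the first terms listed in the statement. What remains is to identify the two ``endomorphism'' complexes $\RHom_{A^e}(I_*I^*A,I_*I^*A)$ and $\RHom_{A^e}(J_!J^!A,J_!J^!A)$ with the Hochschild cochain complexes $\RHom_{A_1^e}(A_1,A_1)$ and $\RHom_{A_2^e}(A_2,A_2)$; this is the cohomological analogue of the ``trace'' identifications carried out for homology in the proof of Theorem~\ref{HH triangle}.

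For the $A_2$-side I would use that $J_!$ is a full embedding to get $\RHom_{A^e}(J_!J^!A,J_!J^!A)\cong\RHom_{A^{\op}\otimes A_2}(P,P)$ with $P:=\RHom_A(Y_2,A)$. Since $Y_2$ is perfect over $A$, so is $P$ as a left $A$-module; taking left-$A$-linear homs first and using reflexivity gives $\RHom_{A^{\op}}(P,P)\cong P\otimes^L_A Y_2\cong\RHom_A(Y_2,Y_2)\cong A_2$ in $D(A_2^e)$ (the last step being $J^!J_!\cong\mathrm{id}$), whence $\RHom_{A^{\op}\otimes A_2}(P,P)\cong\RHom_{A_2^e}(A_2,A_2)$, the relevant duality being packaged by Lemma~\ref{rep}. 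For the $A_1$-side, $I_*$ being a full embedding gives $\RHom_{A^e}(I_*I^*A,I_*I^*A)\cong\RHom_{A^{\op}\otimes A_1}(Y,Y)$, and I would then run the chain $\RHom_{A_1^e}(A_1,A_1)\cong\RHom_{A_1^e}(Y_1\otimes^L_A Y, Y_1\otimes^L_A Y)\cong\RHom_{A^{\op}\otimes A_1}(Y,\RHom_{A_1}(Y_1,Y_1\otimes^L_A Y))\cong\RHom_{A^{\op}\otimes A_1}(Y,\RHom_{A_1}(Y_1,A_1))\cong\RHom_{A^{\op}\otimes A_1}(Y,Y)$, using $Y_1\otimes^L_A Y\cong A_1$ (that is, $i^*i_*\cong\mathrm{id}$), the adjunction $Y_1\otimes^L_A(-)\dashv\RHom_{A_1}(Y_1,-)$, and the reflexivity $\RHom_{A_1}(\RHom_{A_1}(Y,A_1),A_1)\cong Y$ coming from the perfectness of $Y$ over $A_1$.

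I expect the $A_1$-side to be the main obstacle. For the $A_2$-side one may simply dualize, because $Y_2=X_2$ is perfect (compact) over $A$; but in a general recollement $Y_1=i_*A_1$ is only self-compact, hence need not be perfect over $A$, so the symmetric dualization is unavailable and one is forced into the adjunction argument above, where the relation $Y_1\otimes^L_A Y\cong A_1$ together with the $A_1$-perfectness of $Y$ (rather than of $Y_1$) does the work. Finally, the assertion that $\phi,\psi,\varphi$ induce graded-ring homomorphisms between the Hochschild cohomology rings is inherited from the last part of Lemma~\ref{3 triangles}: the identifications just made arise from full embeddings and adjunctions that respect Yoneda composition, so they are isomorphisms of graded rings, and composing them with $\bar\phi,\bar\psi,\bar\varphi$ yields the claimed ring homomorphisms $\RHom_{A^e}(A,A)\to\RHom_{A_1^e}(A_1,A_1)$ and so on.
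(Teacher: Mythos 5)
Your proposal is correct and follows essentially the same route as the paper's own proof: apply Theorem~\ref{realize as derived functors standard}(2) with $B=A^{\op}$, feed the canonical triangle $J_!J^!A \rightarrow A \rightarrow I_*I^*A \rightarrow$ into Lemma~\ref{3 triangles}, and identify the two endomorphism complexes $\RHom_{A^e}(J_!J^!A,J_!J^!A)$ and $\RHom_{A^e}(I_*I^*A,I_*I^*A)$ with $\RHom_{A_2^e}(A_2,A_2)$ and $\RHom_{A_1^e}(A_1,A_1)$. You even make explicit a check the paper leaves implicit, namely the hypothesis $\RHom_{A^e}(J_!J^!A,I_*I^*A)=0$ of Lemma~\ref{3 triangles}, verified correctly via the $(J_!,J^!)$-adjunction and $J^!I_*=0$. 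The only divergence is in the bookkeeping of the corner identifications: the paper routes both sides through Lemma~\ref{rep}'s duality and then the full embeddings coming from the $B=A_2^{\op}$ and $B=A_1^{\op}$ instances of Theorem~\ref{realize as derived functors standard}(2) (with $Y_1=\RHom_{A_1}(Y,A_1)$ on the $A_1$-side), whereas you unwind the same perfectness/reflexivity inputs by hand --- an inner-endomorphism computation on the $A_2$-side, and the counit isomorphism $Y_1\otimes^L_A Y\cong A_1$ plus tensor--hom adjunction on the $A_1$-side; your diagnosis that it is the $A_1$-perfectness of $Y$, rather than any $A$-perfectness of $Y_1$, that carries the $A_1$-side is exactly how the paper deploys Lemma~\ref{rep}. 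Two side-switching slips should be repaired: $P\otimes^L_A Y_2$ does not typecheck, since $P=\RHom_A(Y_2,A)$ is an $A$-$A_2$-bimodule complex --- the intended term is $Y_2\otimes^L_A P\cong\RHom_A(Y_2,Y_2)$; and your $\RHom_{A_1}(Y_1,-)$ must be Hom over the \emph{left} $A_1$-structure of $Y_1$, i.e.\ $\RHom_{A_1^{\op}}(Y_1,-)$ in the paper's right-module conventions. Finally, note that the duality of Lemma~\ref{rep} is contravariant and so reverses Yoneda composition, so ``respects Yoneda composition'' should be read as yielding an anti-homomorphism of graded endomorphism rings; this is harmless for the stated conclusion, and the paper itself settles the graded-ring statement by the same explicit commutative square your argument implicitly invokes.
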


\begin{proof} By Theorem~\ref{tensor},
we have a recollement $(D(A^{\op} \otimes A_1), D(A^{\op} \otimes
A), \linebreak D(A^{\op} \otimes A_2), I^*, I_*=I_!, I^!, J_!,
J^!=J^*, J_*)$ such that
$$\begin{array}{lll} I^*  \cong - \otimes^L_A Y , & & J_! \cong - \otimes^L_{A_2}Y_2,\\
I_*=I_! \cong \RHom_{A_1}(Y,-), & & J^!=J^* \cong \RHom_A(Y_2,-),\\
I^! \cong \RHom_A(\RHom_{A_1}(Y,A_1),-), & & J_* \cong
\RHom_{A_2}(\RHom_A(Y_2,A),-).
\end{array}$$
Thus we obtain a triangle $J_!J^!A \rightarrow A \rightarrow I_*I^*A
\rightarrow $ in $D(A^{\op} \otimes A)$. Note that
$\RHom_{A^e}(J_!J^!A,I_*I^*A)=0$ due to the recollement. By
Lemma~\ref{3 triangles}, we have three triangles in $D(k)$:
$$\begin{array}{ll}
(1) & \RHom_{A^e}(A,J_!J^!A) \rightarrow \RHom_{A^e}(A,A) \rightarrow \RHom_{A^e}(I_*I^*A,I_*I^*A) \rightarrow \; , \\
(2) & \RHom_{A^e}(I_*I^*A,A) \rightarrow \RHom_{A^e}(A,A)
\rightarrow \RHom_{A^e}(J_!J^!A,J_!J^!A) \rightarrow \; , \\
(3) & \RHom_{A^e}(I_*I^*A,J_!J^!A) \rightarrow \RHom_{A^e}(A,A)
\rightarrow \RHom_{A^e}(J_!J^!A,J_!J^!A) \oplus \\ &
\RHom_{A^e}(I_*I^*A,I_*I^*A) \rightarrow .
\end{array}$$
By Lemma~\ref{rep} and Theorem~\ref{tensor}, we have
$$\begin{array}{lll} \RHom_{A^e}(I_*I^*A,I_*I^*A) & \cong & \RHom_{A^{\op} \otimes A_1}(I^*A,I^*A)\\
& \cong & \RHom_{A^{\op} \otimes A_1}(Y,Y)\\
& \cong & \RHom_{A_1^{\op} \otimes
A}(\RHom_{A_1}(Y,A_1),\RHom_{A_1}(Y,A_1))\\ & \cong &
\RHom_{A_1^e}(A_1,A_1)
\end{array}$$
and
$$\begin{array}{lll} \RHom_{A^e}(J_!J^!A,J_!J^!A) & \cong & \RHom_{A^{\op} \otimes A_2}(J^!A,J^!A)\\
& \cong & \RHom_{A^{\op} \otimes A_2}(\RHom_A(Y_2,A),\RHom_A(Y_2,A))\\
& \cong & \RHom_{A_2^{\op} \otimes A}(Y_2,Y_2)\\ & \cong &
\RHom_{A_2^e}(A_2,A_2),
\end{array}$$
where the last steps follow from the full embeddings $I_*$ and $J_!$
in Theorem~\ref{tensor} by taking $B=A_1^\op$ and $A_2^\op$
respectively. Thus there are three triangles in $D(k)$ as required.

The last statement follows from the isomorphisms above and
Lemma~\ref{3 triangles}.
\end{proof}

From the three triangles in Theorem~\ref{HH^ trianle}, by taking
cohomologies, we can obtain three long exact sequences on the
Hochschild cohomologies of the algebras:

\begin{corollary} \label{HH^ long exact sequences} Let $A_1,A$ and $A_2$ be algebras,
and $(D(A_1), D(A), D(A_2), i^*,i_* \linebreak =i_!, i^!, j_!,
j^!=j^*, j_*)$ a standard recollement given by $Y \in D(A^{\op}
\otimes A_1)$ and $Y_2 \in D(A_2^{\op} \otimes A)$. Then there are
three long exact sequences:
$$\begin{array}{lrl} (1) & \cdots  \rightarrow & \Hom_{D(A^e)}(A,\RHom_A(Y_2,A) \otimes^L_{A_2}Y_2[n])
\\ & \rightarrow & HH^n(A) \stackrel{\phi_n}{\rightarrow} HH^n(A_1) \rightarrow \cdots \; , \\
(2) & \cdots \rightarrow & \Hom_{D(A^e)}(\RHom_{A_1}(Y,Y),A[n]) \\ &
\rightarrow & HH^n(A) \stackrel{\psi_n}{\rightarrow} HH^n(A_2) \rightarrow \cdots \; , \\
(3) & \cdots \rightarrow &
\Hom_{D(A^e)}(\RHom_{A_1}(Y,Y),\RHom_A(Y_2,A) \otimes^L_{A_2}Y_2[n]) \\
& \rightarrow & HH^n(A) \stackrel{\varphi_n}{\rightarrow} HH^n(A_1)
\oplus HH^n(A_2) \rightarrow \cdots.
\end{array}$$
Moreover, $\oplus_{n \in \mathbb{N}} \phi_n$ (resp. $\oplus_{n \in
\mathbb{N}} \psi_n$, $\oplus_{n \in \mathbb{N}} \varphi_n$) is a
homomorphism of graded rings between the corresponding Hochschild
cohomology rings.
\end{corollary}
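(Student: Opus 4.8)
The plan is to obtain all three long exact sequences directly from the three triangles in $D(k)$ furnished by Theorem~\ref{HH^ trianle}, by applying the cohomological functor $H^n(-) \cong \Hom_{D(k)}(k,-[n])$. The mechanism is purely formal: for any triangle $U \to V \to W \to$ in a triangulated category and any cohomological functor one gets a long exact sequence, and $H^n$ is cohomological on $D(k)$. Thus each triangle (1), (2), (3) of Theorem~\ref{HH^ trianle} yields a long exact sequence of $k$-modules of the shape $\cdots \to H^n(U) \to H^n(V) \to H^n(W) \to H^{n+1}(U) \to \cdots$.

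Next I would identify the cohomology of each term with the groups named in the statement, using only the definition $\Ext^n = H^n \circ \RHom$ and the definition of Hochschild cohomology. Concretely, $H^n(\RHom_{A^e}(A,A)) = \Ext^n_{A^e}(A,A) = HH^n(A)$, and likewise $H^n(\RHom_{A_i^e}(A_i,A_i)) = HH^n(A_i)$ for $i=1,2$; the remaining (outer) terms give, respectively, $\Ext^n_{A^e}(A,\RHom_A(Y_2,A)\otimes^L_{A_2}Y_2)$, $\Ext^n_{A^e}(\RHom_{A_1}(Y,Y),A)$ and $\Ext^n_{A^e}(\RHom_{A_1}(Y,Y),\RHom_A(Y_2,A)\otimes^L_{A_2}Y_2)$. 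Setting $\phi_n := H^n(\phi)$, $\psi_n := H^n(\psi)$ and $\varphi_n := H^n(\varphi)$, the three long exact sequences obtained in this way are exactly (1), (2) and (3).

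Finally, for the multiplicative assertion I would note that the total cohomology $\oplus_n \phi_n = \oplus_n H^n(\phi)$ is precisely the map $\bar{\phi}$ of Theorem~\ref{HH^ trianle}, which is already shown there to be a homomorphism of graded rings between the Hochschild cohomology rings (taking $HH^*(A) = \oplus_n \Hom_{D(A^e)}(A,A[n])$ with the Yoneda product). The same holds for $\oplus_n \psi_n = \bar{\psi}$ and $\oplus_n \varphi_n = \bar{\varphi}$, so nothing beyond Theorem~\ref{HH^ trianle} is needed.

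The only point demanding care, and hence what I regard as the main obstacle, is bookkeeping rather than substance: one must check that passing to total cohomology is compatible with the degree conventions and with the Yoneda/cup product, so that the degree-$n$ components $H^n(\phi)$ assemble into exactly the graded-ring map $\bar{\phi}$ described in the theorem. Since $\bar{\phi}$ there is defined precisely as the map induced by $\phi$ on $\oplus_n \Hom_{D(A^e)}(A,A[n])$, this compatibility is immediate and no genuine difficulty arises; the corollary is a formal consequence of Theorem~\ref{HH^ trianle}.
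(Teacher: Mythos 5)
Your proposal is correct and is exactly the paper's route: the paper derives the corollary by taking cohomologies of the three triangles of Theorem~\ref{HH^ trianle}, identifying $H^n(\RHom_{A^e}(A,A))$ with $HH^n(A)$ (and likewise for the other terms), and invoking the final statement of that theorem for the graded-ring claim about $\bar{\phi}$, $\bar{\psi}$, $\bar{\varphi}$. Your only extra bookkeeping worry is harmless, since $\Hom_{D(A^e)}(A,A[n])=0$ for $n<0$, so the $\mathbb{Z}$-graded maps of the theorem restrict to the $\mathbb{N}$-graded ones of the corollary.
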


Applying Corollary~\ref{HH^ long exact sequences} to
Example~\ref{stratifying ideal} by taking $A_1=A/AeA, A_2=eAe, Y_2
\linebreak =eA$ and $Y=A/AeA$, we can obtain the following result
due to K\"{o}nig and Nagase:

\begin{corollary} {\rm (K\"{o}nig-Nagase \cite{KN})}
Let $A$ be an algebra, $e$ an idempotent of $A$ and $AeA$ a
stratifying ideal of $A$. Then there are three long exact sequences:
$$\begin{array}{ll}
(1) & \cdots \rightarrow \Ext^n_{A^e}(A,AeA) \rightarrow HH^n(A)
\stackrel{\psi_n}{\rightarrow} HH^n(A/AeA) \rightarrow \cdots \; , \\
(2) & \cdots \rightarrow \Ext^n_{A^e}(A/AeA,A) \rightarrow
HH^n(A) \stackrel{\phi_n}{\rightarrow} HH^n(eAe) \rightarrow \cdots \; , \\
(3) & \cdots \rightarrow \Ext^n_{A^e}(A/AeA,AeA) \rightarrow HH^n(A)
\stackrel{\varphi_n}{\rightarrow} HH^n(A/AeA) \oplus HH^n(eAe)
\rightarrow \cdots.
\end{array}$$
Moreover, $\oplus_{n \in \mathbb{N}} \phi_n$ (resp. $\oplus_{n \in
\mathbb{N}} \psi_n$, $\oplus_{n \in \mathbb{N}} \varphi_n$) is a
homomorphism of graded rings between the corresponding Hochschild
cohomology rings.
\end{corollary}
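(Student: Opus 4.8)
The plan is to specialize Corollary~\ref{HH^ long exact sequences} to the recollement attached to the stratifying ideal $AeA$ in Example~\ref{stratifying ideal}, and then to rewrite the bimodule complexes that occur. First I note that the recollement $(D(A/AeA), D(A), D(eAe), i^*, i_*=i_!, i^!, j_!, j^!=j^*, j_*)$ of Example~\ref{stratifying ideal} is already \emph{standard} in the sense introduced before Theorem~\ref{realize as derived functors standard}: since there $i^* = -\otimes^L_A A/AeA$ and $j_! = -\otimes^L_{eAe} eA$, it is given by $Y := A/AeA \in D(A^{\op}\otimes (A/AeA))$ and $Y_2 := eA \in D((eAe)^{\op}\otimes A)$. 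Hence Corollary~\ref{HH^ long exact sequences} applies with $A_1 = A/AeA$ and $A_2 = eAe$, and already yields three long exact sequences together with the graded-ring statement; what remains is only to identify the coefficient complexes.

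The key computation concerns $\RHom_A(Y_2,A)\otimes^L_{A_2} Y_2 = \RHom_A(eA,A)\otimes^L_{eAe} eA$, which appears in sequences (1) and (3). Since $A = eA\oplus (1-e)A$ as right $A$-modules, $eA$ is a finitely generated projective right $A$-module, so $\RHom_A(eA,A)\cong \Hom_A(eA,A)\cong Ae$ as $A$-$eAe$-bimodules, concentrated in degree $0$. Because $AeA$ is a stratifying ideal, $\Tor^{eAe}_n(Ae,eA)=0$ for all $n\ge 1$ and multiplication induces $Ae\otimes_{eAe} eA\cong AeA$; therefore $\RHom_A(eA,A)\otimes^L_{eAe} eA \cong AeA$ in $D(A^e)$. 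On the other side $\RHom_{A_1}(Y,Y) = \RHom_{A/AeA}(A/AeA, A/AeA)\cong A/AeA$, the regular bimodule. These are exactly the identifications $J_!J^!A\cong AeA$ and $I_*I^*A\cong A/AeA$, so that the triangle $J_!J^!A\to A\to I_*I^*A\to$ underlying Theorem~\ref{HH^ trianle} becomes the canonical triangle $AeA\to A\to A/AeA\to$ in $D(A^e)$ coming from $0\to AeA\to A\to A/AeA\to 0$.

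Substituting these into Corollary~\ref{HH^ long exact sequences}, and recalling from Lemma~\ref{rep} (as applied inside the proof of Theorem~\ref{HH^ trianle}) that $\RHom_{A^e}(AeA,AeA)\cong \RHom_{(eAe)^e}(eAe,eAe)$ computes $HH^\bullet(eAe)$ while $\RHom_{A^e}(A/AeA, A/AeA)\cong \RHom_{(A/AeA)^e}(A/AeA, A/AeA)$ computes $HH^\bullet(A/AeA)$, I obtain the three stated long exact sequences, with the coefficient bimodule $Ae\otimes_{eAe} eA\cong AeA$ in the left-hand entries of (1) and (3) and $A/AeA$ in that of (2). The assertion that $\oplus_n\phi_n$, $\oplus_n\psi_n$ and $\oplus_n\varphi_n$ are homomorphisms of graded rings is inherited verbatim from Corollary~\ref{HH^ long exact sequences}, since $\bar\phi,\bar\psi,\bar\varphi$ are induced by the very same triangle.

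I expect the only genuine point requiring care to be the collapse of the derived coefficient complex $\RHom_A(eA,A)\otimes^L_{eAe} eA$ to the honest bimodule $AeA$: this is precisely where the stratifying hypothesis is used, through the vanishing $\Tor^{eAe}_{>0}(Ae,eA)=0$ and the isomorphism $Ae\otimes_{eAe}eA\cong AeA$, and without it one would retain an unsimplified derived tensor product. A secondary bookkeeping point is to keep the assignment $A_1 = A/AeA$, $A_2 = eAe$ fixed throughout, so that $HH^\bullet(A/AeA)$ and $HH^\bullet(eAe)$ land in the intended sequences.
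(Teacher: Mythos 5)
Your proposal is correct and takes essentially the same route as the paper, whose entire proof is the one line ``apply Corollary~\ref{HH^ long exact sequences} to Example~\ref{stratifying ideal}''; you supply exactly the details this leaves implicit, namely that the stratifying recollement is standard with $Y=A/AeA$, $Y_2=eA$, and that $\RHom_A(eA,A)\otimes^L_{eAe}eA\cong Ae\otimes^L_{eAe}eA\cong AeA$ (using projectivity of $eA$ and the stratifying hypothesis) while $\RHom_{A/AeA}(A/AeA,A/AeA)\cong A/AeA$. Note only that your computation correctly produces the coefficient $AeA\cong Ae\otimes_{eAe}eA$ in sequences (1) and (3), where the corollary as printed has ``$eAe$'' --- evidently a typo or shorthand in the paper, since $eAe$ carries no natural $A$-bimodule structure, and the K\"{o}nig--Nagase sequences indeed use $AeA$.
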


\bigskip

\noindent {\footnotesize {\bf ACKNOWLEDGMENT.} I thank Dong Yang for
pointing out an error in the original version and some helpful
discussions. I am indebted to Steffen K\"{o}nig and Dong Yang for
letting me know their recent work \cite{AKLY}. I am grateful to both
referees for their numerous very valuable suggestions which improve
some original results especially the present Theorem~\ref{op} and
Theorem~\ref{smooth} and make the paper much readable. I am
sponsored by Project 10731070 and 11171325 NSFC.}

\footnotesize

\end{document}